\documentclass[12pt,a4paper,reqno]{amsart}

\usepackage[margin=1in,footskip=0.25in]{geometry}
\usepackage{amsmath, amsthm, amssymb}
\usepackage{graphicx}

\usepackage{enumitem}
\usepackage[verbose]{hyperref}

\newtheorem{theorem}{Theorem}

\date{}

\begin{document}

\title[Crossing limit cycles for a family of isochronous centers]
{Emergence of Multiple Crossing Limit Cycles in Planar Piecewise Systems with Isochronous Centers and Nonsmooth Switching Manifolds}

\author{Sonia Isabel Renteria Alva$^1$}

\address{$^1$ Instituto de Matemática Pura e Aplicada, Estrada Dona Castorina 110,
Jardim Botânico, 22460-320,
 Rio de Janeiro, Brazil}
\email{sonia.alva@impa.br}

\author{Pedro Iván Suárez Navarro$^2$}
\address{$^2$ Instituto de Matemática Pura e Aplicada, Estrada Dona Castorina 110,
Jardim Botânico, 22460-320,
 Rio de Janeiro, Brazil}
\email{ivan.suarez@impa.br}

\subjclass[2010]{37G15, 37D45.}

\keywords{Limit cycles, linear Hamiltonian saddle, quadratic isochronous centers, discontinuous piecewise differential systems, first integrals}

\begin{abstract}
Discontinuous piecewise differential systems exhibit dynamical behaviors with no counterpart in smooth systems, particularly in the presence of nonsmooth switching structures. In this work, we extend previous results for systems separated by a straight line to the case where the switching manifold is a nonregular curve, showing that the loss of regularity significantly increases the algebraic complexity of the closing conditions defining crossing limit cycles. As a consequence, we derive explicit upper bounds for the number of crossing limit cycles in planar systems formed by a linear Hamiltonian saddle and quadratic isochronous centers, and construct explicit examples exhibiting four crossing limit cycles in each case, thereby providing sharp constructive lower bounds. While the upper bounds follow from classical algebraic arguments, the realization of multiple crossing limit cycles requires solving nonlinear systems of high degree and remains highly nontrivial. These results highlight how nonsmooth switching manifolds enhance dynamical complexity and promote multistability in discontinuous piecewise systems.
\end{abstract}

\maketitle

\section{Introduction and statement of the main results}
Discontinuous piecewise differential systems have attracted considerable attention in recent years due to their ability to model complex phenomena arising in engineering, physics, and applied sciences. Within the Filippov framework \cite{filippov2013differential}, such systems allow the coexistence of distinct vector fields separated by a switching manifold, giving rise to a variety of nonsmooth dynamical behaviors including crossing, sliding, and grazing phenomena. The interplay between geometry and dynamics in these systems often leads to effects that have no counterpart in smooth differential equations \cite{bernardo2008piecewise,simpson2010bifurcations}.

A central topic in the qualitative theory of dynamical systems is the study of limit cycles and their multiplicity, which is closely related to the second part of Hilbert's 16th problem. In the nonsmooth setting, this problem has evolved into the analysis of crossing limit cycles in piecewise systems, where periodic orbits intersect the discontinuity set transversally. Significant progress has been made in recent years, particularly for systems composed of linear Hamiltonian saddles and isochronous centers \cite{esteban202116th,llibre202516th,llibre2022crossing,llibre2021limit}. These works show that even low-degree systems can generate multiple crossing limit cycles, highlighting the intrinsic richness of nonsmooth dynamics.

An important aspect influencing this behavior is the geometry of the switching manifold. While many classical results assume regular (smooth or straight-line) separation sets, recent studies have shown that nonregular switching manifolds can dramatically increase the dynamical complexity of the system \cite{llibre2023extended,huan2019limit,baymout2024limit,he2024limit}. In particular, geometric irregularities introduce additional nonlinear constraints in the matching conditions that define periodic orbits, leading to higher algebraic complexity and new mechanisms for the generation of limit cycles.

From an analytical perspective, the detection of crossing limit cycles is typically reduced to solving nonlinear systems derived from the closing conditions between trajectories on each side of the discontinuity. This reduction naturally leads to polynomial equations of increasing degree, for which Bézout's arguments provide effective upper bounds on the maximum number of solutions. However, while such bounds follow from classical algebraic considerations, the explicit realization of multiple crossing limit cycles remains a highly nontrivial problem, requiring delicate constructions and careful control of parameters.

 We consider the extension of the 16th Hilbert's problem on planar discontinuous piecewise differential systems separated by a nonregular switching manifold and formed by a linear Hamiltonian saddle and quadratic isochronous centers. Our main goal is to understand how the loss of regularity in the discontinuity set affects the structure of crossing limit cycles. We show that geometric irregularity acts as a mechanism that enhances algebraic complexity and promotes the emergence of multiple crossing limit cycles. Moreover, we construct explicit examples exhibiting several crossing limit cycles, providing constructive evidence of multistability in this class of systems, in line with recent advances on realizability \cite{alva2025crossing,berbache2025lower,llibre2026limit}.

These results contribute to the growing body of literature indicating that nonsmooth systems, especially those with irregular switching structures, constitute a natural setting for the emergence of complex dynamics, where geometry and algebra interact in a nontrivial way to produce rich and sometimes unexpected behaviors.

In this paper we study planar piecewise differential systems formed by 
\begin{align}\label{eqmainOrig}
   (\dot x, \dot y)=\begin{cases}
(X^{+}(x,y), Y^{+}(x,y)), & \text{if } (x,y)\in \Sigma^{+},\\
(X^{-}(x,y), Y^{-}(x,y)), & \text{if } (x,y)\in \Sigma^{-},
\end{cases} 
\end{align}

where the separation curve is the nonregular line
$\Sigma = \Sigma^{+} \cap \Sigma^{-}$ such that
$$\Sigma^{+}=\lbrace (x,y): x\geq 0 \text{ and } y\geq 0 \rbrace, \quad  \Sigma^{-}=\lbrace (x,y): x\leq 0 \rbrace \cup \lbrace (x,y): x\geq 0 \text{ and } y\leq 0 \rbrace. $$

According to \cite{filippov2013differential}, a point $p$ on the discontinuity set is a \textit{crossing point} if $X^{-}(p)X^{+}(p) > 0$. A periodic solution of system~\eqref{eqmainOrig} is of crossing type if it intersects the discontinuity curve exactly twice at crossing points, and it is called a \textit{crossing limit cycle} if it is isolated among such solutions. For simplicity, we use the term \emph{limit cycle} to refer to crossing limit cycles.

In this paper, we extend the results obtained in~\cite{llibre202516th} for discontinuous piecewise differential systems separated by a straight line to the case where the discontinuity set is a nonregular curve, while preserving the same isochronous centers. In the straight-line setting, the authors showed that the maximum number of crossing limit cycles arising from the combination of a linear Hamiltonian saddle in $x>0$ with any of the four quadratic isochronous centers $\mathtt{Q}_{i}$ in $x<0$ is two. They also provided examples exhibiting one crossing limit cycle, leaving open the question of whether the upper bound can actually be attained.

In contrast, the present framework reveals a richer and more intricate dynamical scenario. The introduction of a nonregular discontinuity curve leads to additional technical difficulties and a more complex geometric structure. Despite these challenges, we are able to construct explicit systems exhibiting at least four crossing limit cycles, highlighting a significant increase in dynamical complexity compared to the classical straight-line case.
 
In this work, the analysis is restricted to the region $\Sigma^{-}$, where quadratic polynomial differential systems of the form
\begin{equation}\label{sistemacubico}
\begin{aligned}
\dot{x} &= -y + a_{20}x^2 + a_{11}xy + a_{02}y^2,\\
\dot{y} &= \phantom{-}x + b_{20}x^2 + b_{11}xy + b_{02}y^2,
\end{aligned}
\end{equation}
are considered.

Loud \cite{loud1964behavior}  classified the quadratic systems of the form ~\eqref{sistemacubico} that possess an isochronous center at the origin. More precisely, such a system has an isochronous center at the origin if and only if it can be transformed, by means of a linear change of coordinates and a rescaling of time, into one of the following four differential systems:

\begin{equation}\label{sistemas1s2s3s4}
\begin{aligned}
&(\mathtt{Q}_1): \;
\left\{
\begin{aligned}
\dot{x} &= -y + x^2 - y^2, \\
\dot{y} &= x(1 + 2y),
\end{aligned}
\right.
\qquad
(\mathtt{Q}_2): \;
\left\{
\begin{aligned}
\dot{x} &= -y + x^2, \\
\dot{y} &= x(1+y),
\end{aligned}
\right.
\\[0.3cm]
&(\mathtt{Q}_3): \;
\left\{
\begin{aligned}
\dot{x} &= -y -\frac{4}{3}x^2, \\
\dot{y} &= x\!\left(1-\frac{16}{3}y\right),
\end{aligned}
\right.
\qquad
(\mathtt{Q}_4): \;
\left\{
\begin{aligned}
\dot{x} &= -y +\frac{16}{3}x^2-\frac{4}{3}y^2, \\
\dot{y} &= x\!\left(1+\frac{8}{3}y\right).
\end{aligned}
\right.
\end{aligned}
\end{equation}

The first integrals associated with the systems~\eqref{sistemas1s2s3s4} were obtained in \cite{chavarriga1999survey} and are given, respectively, by
\begin{equation}\label{intprimeiras1s2s3s4}
\begin{aligned}
&(\mathtt{Q}_1): \quad \widetilde{H}_1(x,y)=\frac{x^2+y^2}{1+2y}, \qquad
(\mathtt{Q}_2): \quad \widetilde{H}_2(x,y)=\frac{x^2+y^2}{(1+y)^2},\\[0.2cm]
&(\mathtt{Q}_3): \quad \widetilde{H}_3(x,y)=\frac{9(x^2+y^2)-24x^2y+16x^4}{-3+16y}, \quad
(\mathtt{Q}_4): \quad \widetilde{H}_4(x,y)=\frac{9(x^2+y^2)+24y^3+16y^4}{(3+8y)^4}.
\end{aligned}
\end{equation}

In contrast, in the region $\Sigma^{+}$ we consider a linear Hamiltonian saddle. As shown in \cite{llibre2021limit} after an affine transformation and a rescaling of the independent variable, any linear Hamiltonian saddle can be written in the form
\begin{equation}\label{sistemalinear}
(\mathtt{L}_S):\;
\begin{aligned}
\dot{x} &= -A x - \delta y + B,\\
\dot{y} &= \mu x + A y + C,
\end{aligned}
\end{equation}
where $\mu$, $A$, $\delta$, $B$, and $C$ are real parameters satisfying $A^{2}-\delta\mu>0$. 
Moreover, system~\eqref{sistemalinear} admits the first integral
\begin{equation}\label{intprimeiralinear}
H_S(x,y) = -\frac{\mu}{2}x^{2} - Axy - \frac{\delta}{2}y^{2} - Cx + By.
\end{equation}

Our main result establishes the maximum number of crossing limit cycles that may arise in discontinuous piecewise differential systems of the form~\eqref{eqmainOrig}, where in the region $\Sigma^{+}$ the dynamics is governed by a linear Hamiltonian saddle $(\mathtt{L}_s)$, and in the region $\Sigma^{-}$ the system corresponds, after an arbitrary affine change of variables, to one of the quadratic isochronous differential systems $(\mathtt{Q}_1)$, $(\mathtt{Q}_2)$, $(\mathtt{Q}_3)$, or $(\mathtt{Q}_4)$. This result highlights how the geometry of the discontinuity curve directly influences the number of crossing limit cycles, producing a substantial increase compared to the classical straight-line case. 

In this work, crossing limit cycles intersect the discontinuity curve $\Sigma$ at exactly two points. The case of more than two intersections remains open.

\begin{theorem}\label{Teor-Principal}
Consider the class of discontinuous piecewise differential systems separated by a nonregular line and composed of two differential systems which, after an affine change of variables, belong to the classes $(\mathtt{L}_s)$ and $(\mathtt{Q}_i)$, with $i\in\{1,2,3,4\}$. Then the following statements hold:
\begin{itemize}
\item[(i)] For systems of type $(\mathtt{L}_s)$-$(\mathtt{Q}_1)$, the maximum number of limit cycles is five. Moreover, there exist systems of this type exhibiting exactly four limit cycles (see Fig.~\ref{fig-Hs-H1}).
\item[(ii)] For systems of type $(\mathtt{L}_s)$-$(\mathtt{Q}_2)$, the maximum number of limit cycles is seven. Moreover, there exist systems of this type with exactly four limit cycles (see Fig.~\ref{fig-Hs-H2}).
\item[(iii)] For systems of type $(\mathtt{L}_s)$-$(\mathtt{Q}_3)$, the maximum number of limit cycles is nine. Furthermore, there exist systems of this type possessing exactly four limit cycles (see Fig.~\ref{fig-Hs-H3}).
\item[(iv)] For systems of type $(\mathtt{L}_s)$-$(\mathtt{Q}_4)$, the maximum number of limit cycles is eleven. In addition, there exist systems of this type with exactly four cycles (see Fig.~\ref{fig-Hs-H4}).
\end{itemize}
\end{theorem}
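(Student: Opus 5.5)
The plan is to reduce the existence of crossing limit cycles to the solutions of a polynomial system built from the first integrals, bound the solutions with Bézout, and then build explicit examples. The switching curve $\Sigma$ is the union of the half-lines $\Sigma_1=\{(x,0):x\ge 0\}$ and $\Sigma_2=\{(0,y):y\ge 0\}$. A crossing periodic orbit meeting $\Sigma$ exactly twice must leave $\Sigma^{+}$ through one half-line and come back through the other. If both intersection points lay on the same half-line, the saddle arc and the center arc would be level curves of the respective first integrals joining two points of a straight line, and the straight-line analysis of \cite{llibre202516th} would apply; we treat that configuration separately, or exclude it by the usual transversality argument. The main case is therefore an orbit through $p_1=(x_1,0)\in\Sigma_1$ and $p_2=(0,y_2)\in\Sigma_2$ with $x_1,y_2>0$.

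In $\Sigma^{+}$ we use $H_S$ from \eqref{intprimeiralinear}. In $\Sigma^{-}$ the quadratic center is $(\mathtt{Q}_i)$ after a general affine change $(x,y)\mapsto(a_1x+a_2y+a_3,\,b_1x+b_2y+b_3)$, with first integral $H_i=\widetilde H_i\circ$(affine map). The closing conditions are
\[
H_S(x_1,0)=H_S(0,y_2),\qquad H_i(x_1,0)=H_i(0,y_2).
\]
We clear denominators and remove the trivial factor corresponding to $p_1=p_2$, which is possible only at the corner. This leaves two polynomial equations $e_1(x_1,y_2)=0$ and $e_2(x_1,y_2)=0$. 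Each limit cycle gives a distinct isolated solution with $x_1,y_2>0$.

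We now count degrees. The saddle equation $-\frac{\mu}{2}x_1^2-Cx_1=-\frac{\delta}{2}y_2^2+By_2$ has degree $2$. For $\mathtt{Q}_1$, cross-multiplying $\frac{N(p_1)}{D(p_1)}=\frac{N(p_2)}{D(p_2)}$ with $\deg N=2$ and $\deg D=1$ gives degree $3$. The mixed terms do not cancel in general, so Bézout gives at most $2\cdot 3=6$ solutions. For $\mathtt{Q}_2$ the degrees are $2$ and $2$, which gives degree $4$ and the bound $8$. For $\mathtt{Q}_3$ they are $4$ and $1$, giving degree $5$ and the bound $10$. For $\mathtt{Q}_4$ they are $4$ and $4$, giving degree $8$ and the bound $16$.

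To reach the stated bounds $5,7,9,11$ we have to discard solutions. We first remove the degenerate or spurious solutions: those lying on the lines where a denominator vanishes, those at infinity, and those forced to have $x_1\le 0$ or $y_2\le 0$. A natural approach is to solve the linear-in-$x_1$ part of $e_1$ after completing squares, or to eliminate $x_1$ by taking the resultant $\mathrm{Res}_{x_1}(e_1,e_2)$ and bounding the number of real positive roots in $y_2$. We will compute this resultant symbolically and identify extraneous factors that correspond to the diagonal or to points at infinity. For $\mathtt{Q}_4$ the stated bound $11$ is much smaller than $16$, so we expect the resultant to contain factors of the form $(3+8y)^k$ that are forced by the denominator structure. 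The main obstacle is this careful subtraction of spurious roots, more than the lower bound. If it does not match exactly, we will at least establish the Bézout bound after removing the trivial solution, and then refine it.

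For the lower bounds, we fix the saddle parameters so that $e_1$ becomes a simple conic, for example $x_1^2-y_2^2=c$ or a circle arc, and choose the affine parameters of $\mathtt{Q}_i$ so that $e_2$ meets it at four prescribed points $(x_1^{(k)},y_2^{(k)})$ in the positive quadrant. Concretely, we impose four interpolation conditions, which are polynomial in the affine coefficients, and solve them, numerically first and then with rational or algebraic exact values. We then verify three things. First, each intersection is transversal (nonzero Jacobian), so the cycles are isolated. Second, the points are crossing points, that is, the vector fields point consistently across $\Sigma_1$ and $\Sigma_2$. Third, the connecting arcs stay inside $\Sigma^{\pm}$ without touching $\Sigma$ again, and the $\mathtt{Q}_i$ arc avoids the invariant lines and singularities of the center. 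We check these numerically and produce the figures. Finding parameters that satisfy all the positivity and region constraints simultaneously is the delicate computational part.
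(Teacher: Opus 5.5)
Your outline matches the paper's: closing conditions at $(x,0)$ and $(0,y)$, B\'ezout, then explicit numerical examples. The upper-bound step has a genuine gap, and it comes from two concrete oversights. No resultant analysis is needed to close it.

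\textbf{The corner is a point, not a factor.} In the straight-line case $x_1-x_2$ divides $H(x_1,0)-H(x_2,0)$. Here $(0,0)$ is only an isolated common zero. It lies on $P_S$, which has no constant term, and on $P_i(x,y)=N_i(x,0)D_i(0,y)-N_i(0,y)D_i(x,0)$, which vanishes at the origin by antisymmetry. This solution is never admissible, so you subtract it from the B\'ezout count, giving at most $2d_i-1$ cycles. Your degrees $3,4,5$ for $\mathtt{Q}_1,\mathtt{Q}_2,\mathtt{Q}_3$ are correct, so this already gives exactly $5,7,9$. Nothing further needs to be discarded.

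\textbf{The degree for $\mathtt{Q}_4$ is $6$, not $8$.} Put $u=\alpha_1x+\gamma_1$, $v=\beta_1y+\gamma_1$, $g(t)=16t^4+24t^3+9t^2$ and $h(t)=(3+8t)^4=4096t^4+6144t^3+3456t^2+864t+81$. Then
\[
P_4=9(c_1+a_1x)^2h(v)-9(c_1+b_1y)^2h(u)+g(u)h(v)-g(v)h(u).
\]
\begin{itemize}
\item The first two terms have degree $6$.
\item In $g(u)h(v)-g(v)h(u)$ the $u^4v^4$ coefficient cancels, as it always does when numerator and denominator have equal degree.
\item The $u^4v^3$ and $u^3v^4$ coefficients cancel because $16\cdot 6144=24\cdot 4096$; equivalently, $16\,t(4t+3)=(8t+3)^2-9$.
\item The diagonal coefficients $u^kv^k$ vanish by antisymmetry, so the top part is $18432\,(u^4v^2-u^2v^4)$.
\end{itemize}
Hence $d_4=6$ and the bound is $2\cdot 6-1=11$. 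Your fallback would only give $15$, and the conjectured factors $(3+8y)^k$ play no role. The excess in your count is just the spurious degree.

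\textbf{A smaller point on the lower bound.} Since $P_S$ has no constant term, $x_1^2-y_2^2=c$ forces $c=0$, that is, the pair of lines $y=\pm x$. On $y=x$, $P_i(x,x)$ has degree at most $d_i$ and vanishes at $0$. So it has at most $d_i-1$ positive roots: $2$ for $\mathtt{Q}_1$ and $3$ for $\mathtt{Q}_2$, too few for four cycles. The circle choice $\mu=-\delta$ is admissible, since then $A^2-\delta\mu=A^2+\delta^2>0$. The paper does not describe a construction procedure; it simply exhibits explicit numerical systems with four admissible solutions each.
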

The paper is organized as follows. In Section \ref{sect:02} we describe the quadratic isochronous centers under affine transformations. Section \ref{sec:proof} is devoted to the proof of the main result. Explicit constructions exhibiting multiple crossing limit cycles are also provided, illustrating the sharpness of the lower bounds.
\section{The quadratic isochronous differential systems ($\mathtt{Q}_1$), ($\mathtt{Q}_2$), ($\mathtt{Q}_3$) and ($\mathtt{Q}_4$) after an affine change of variables}
\label{sect:02}
In this section, we give the expression of the quadratic
isochronous differential systems ($\mathtt{Q}_1$), ($\mathtt{Q}_2$), ($\mathtt{Q}_3$) and ($\mathtt{Q}_4$) and
their first integrals after the general affine change of
variables $(x, y) \rightarrow (a_1 x +b_1 y + c_1, \alpha_1 x + \beta_1 y + \gamma_1)$,
with $b_1 \alpha_1 - a_1 \beta_1\neq 0$.
Thus, after this affine change of variables the differential system ($\mathtt{Q}_1$) becomes 
\begin{equation}\label{sysS1}
\begin{aligned}
\dot{x} &= \frac{1}{b_1 \alpha_1 - a_1 \beta_1}(b_1 c_1 - c_1^2 \beta_1+2 b_1 c_1 \gamma_1 + \beta_1 \gamma_1 + \beta_1 \gamma_1^2
+ (a_1 b_1 + 2 b_1 c_1 \alpha_1 - 2 a_1 c_1 \beta_1\\ 
&\quad + \alpha_1 \beta_1
 +  2 a_1 b_1 \gamma_1  + 2 \alpha_1 \beta_1 \gamma_1)x
+ (b_1^2 + \beta_1^2) (1 + 2 \gamma_1)y
+ (2 a_1 b_1 \alpha_1 - a_1^2 \beta_1 + \alpha_1^2 \beta_1)x^{2}
\\
&\quad +  2\alpha_1 (b_1^2 + \beta_1^2)xy
+ \beta_1 (b_1^2 + \beta_1^2)y^{2}),
\\[0.4em]
\dot{y} &=\frac{1}{b_1 \alpha_1 - 
  a_1 \beta_1} (-a_1 c_1 + c_1^2 \alpha_1  - 
   2 a_1 c_1 \gamma_1 - \alpha_1 \gamma_1 - \alpha_1 \gamma_1^2+
   (-a_1^2 \alpha_1 - \alpha_1^3) x^2 \\
   &\quad - 
   2 (a_1^2 \beta_1 + \alpha_1^2 \beta_1)x y  + 
   (b_1^2 \alpha_1 - 2 a_1 b_1 \beta_1 - \alpha_1 \beta_1^2) y^2 - 
    (a_1^2 + \alpha_1^2) (1 + 2 \gamma_1)x\\
   &\quad+ 
    (-a_1 b_1 + 2 b_1 c_1 \alpha_1 - 
      2 a_1 c_1 \beta_1 - \alpha_1 \beta_1 - 2 a_1 b_1 \gamma_1 - 
      2 \alpha_1 \beta_1 \gamma_1)y),
\end{aligned}
\end{equation}
with the first integral
\[ H_1(x,y) =\frac{(c_1 + a_1 x + b_1 y)^2 + (\alpha_1 x + 
   \beta_1 y + \gamma_1)^2}{1 + 
 2 (\alpha_1 x + \beta_1 y + \gamma_1)}.
\]
The differential system ($\mathtt{Q}_2$) becomes
\begin{equation}\label{sysS2}
\begin{aligned}
\dot{x} &=\frac{1}{b_1 \alpha_1 - 
 a_1 \beta_1}(b_1 c_1 - c_1^2 \beta_1  + 
  b_1 c_1 \gamma_1 + \beta_1 \gamma_1  + 
   (a_1 b_1 \alpha_1 - a_1^2 \beta_1) x^2 + 
  (b_1^2 \alpha_1 - a_1 b_1 \beta_1)x y \\
  &\quad + 
   (a_1 b_1 + b_1 c_1 \alpha_1 - 2 a_1 c_1 \beta_1 + \alpha_1 \beta_1 +
      a_1 b_1 \gamma_1)x + 
  (b_1^2 - b_1 c_1 \beta_1 + \beta_1^2 + b_1^2 \gamma_1) y),
\\[0.4em]
\dot{y} &= \frac{1}{b_1 \alpha_1 - 
 a_1 \beta_1}(-a_1 c_1 + c_1^2 \alpha_1  - 
  a_1 c_1 \gamma_1 - \alpha_1 \gamma_1 + 
  (a_1 b_1 \alpha_1 - a_1^2 \beta_1)x y  + 
  (b_1^2 \alpha_1 - a_1 b_1 \beta_1)y^2  \\
  &\quad+ 
 (-a_1^2 + a_1 c_1 \alpha_1 - \alpha_1^2 - a_1^2 \gamma_1) x  + 
   (-a_1 b_1 + 2 b_1 c_1 \alpha_1 - 
     a_1 c_1 \beta_1 - \alpha_1 \beta_1 - a_1 b_1 \gamma_1)y),
\end{aligned}
\end{equation}
with the first integral
\[
H_2(x,y) =\frac{(c_1 + a_1 x + b_1 y)^2 + (x \alpha_1 + y \beta_1 + \gamma_1)^2}{(1 +
   x \alpha_1 + y \beta_1 + \gamma_1)^2}.
\]
The differential system ($\mathtt{Q}_3$) becomes
\begin{equation}\label{sysS3}
\begin{aligned}
\dot{x} &= 
\frac{1}{3(b_1 \alpha_1 - 
  a_1 \beta_1)} (3 b_1 c_1 + 4 c_1^2 \beta_1- 16 b_1 c_1 \gamma_1 +    3 \beta_1 \gamma_1 - 12 \beta_1 b_1^2 y^2  +    4 x^2 (-4 a_1 b_1 \alpha_1 + a_1^2 \beta_1)\\
  &\quad -    8 x y (2 b_1^2 \alpha_1 + a_1 b_1 \beta_1)  + 
   x (3 a_1 b_1 - 16 b_1 c_1 \alpha_1 + 8 a_1 c_1 \beta_1  + 
      3 \alpha_1 \beta_1 - 16 a_1 b_1 \gamma_1) \\
   &\quad + 
   y (3 b_1^2 - 8 b_1 c_1 \beta_1 + 3 \beta_1^2 - 16 b_1^2 \gamma_1)),
\\[0.4em]
\dot{y} &= \frac{1}{3(b_1 \alpha_1 - 
  a_1 \beta_1)} (-3 a_1 c_1 - 4 c_1^2 \alpha_1  + 16 a_1 c_1 \gamma_1 - 
   3 \alpha_1 \gamma_1  + 
   12 a_1^2 x^2 \alpha_1 + 
   8 x y (a_1 b_1 \alpha_1 + 2 a_1^2 \beta_1)
    \\
   &\quad- 
   4 (b_1^2 \alpha_1 - 4 a_1 b_1 \beta_1)y^2  + 
  (-3 a_1^2 + 8 a_1 c_1 \alpha_1 - 3 \alpha_1^2 + 
      16 a_1^2 \gamma_1) x  + 
  (-3 a_1 b_1 - 8 b_1 c_1 \alpha_1 \\
   &\quad + 16 a_1 c_1 \beta_1 -  3 \alpha_1 \beta_1 + 16 a_1 b_1 \gamma_1) y ),
\end{aligned}
\end{equation}
with the first integral
\begin{equation*}
\begin{aligned}
H_3(x,y)
&=\frac{1}{-3 + 
   16 (x \alpha_1 + y \beta_1 + \gamma_1)}(16 (c_1 + a_1 x + b_1 y)^4 - 
   24 (c_1 + a_1 x + b_1 y)^2 (x \alpha_1 + y \beta_1 \\
   &\quad + \gamma_1) + 
   9 ((c_1 + a_1 x + b_1 y)^2  + (x \alpha_1 + y \beta_1 + \gamma_1)^2)).
\end{aligned}
\end{equation*}
The differential system ($\mathtt{Q}_4$) becomes

\begin{equation}\label{sysS4}
\begin{aligned}
\dot{x} &= \frac{1}{3(b_1 \alpha_1 - 
  a_1 \beta_1)} (3 b_1 c_1 - 16 c_1^2 +\beta_1+ 8 b_1 c_1 \gamma_1 + 
  3 \beta_1 \gamma_1 + 4 \beta_1 \gamma_1^2 - 
  4 (-2 a_1 b_1 \alpha_1 + 
     4 a_1^2 \beta_1  \\
  &\quad - \alpha_1^2 \beta_1)x^2
  +   8 (b_1^2 \alpha_1 - 3 a_1 b_1 \beta_1  + \alpha_1 \beta_1^2)x y  - 
  4 y^2 (2 b_1^2 \beta_1 - \beta_1^3)  + 
(3 a_1 b_1 + 8 b_1 c_1 \alpha_1 - 32 a_1 c_1 \beta_1  \\
  &\quad  +  3 \alpha_1 \beta_1  + 8 a_1 b_1 \gamma_1  \quad +      8 \alpha_1 \beta_1 \gamma_1) x  + 
 (3 b_1^2 - 24 b_1 c_1 \beta_1 + 3 \beta_1^2 + 8 b_1^2 \gamma_1 + 
     8 \beta_1^2 \gamma_1) y ),
\\[0.4em]
\dot{y} &= \frac{1}{3(b_1 \alpha_1 - 
  a_1 \beta_1)}(-3 a_1 c_1 + 16 c_1^2 \alpha_1- 8 a_1 c_1 \gamma_1 - 
  3 \alpha_1 \gamma_1 - 4 \alpha_1 \gamma_1^2 + 
  4 (2 a_1^2 \alpha_1 - \alpha_1^3)x^2 \\
  &\quad - 
  8 (-3 a_1 b_1 \alpha_1 + a_1^2 \beta_1 + \alpha_1^2 \beta_1)x y + 
  4  (4 b_1^2 \alpha_1 - 
     2 a_1 b_1 \beta_1 - \alpha_1 \beta_1^2)y^2 + 
   (-3 a_1^2 + 24 a_1 c_1 \alpha_1 \\
  &\quad  - 3 \alpha_1^2 - 
     8 a_1^2 \gamma_1 - 8 \alpha_1^2 \gamma_1)x + 
  (-3 a_1 b_1 + 32 b_1 c_1 \alpha_1 - 8 a_1 c_1 \beta_1 - 
     3 \alpha_1 \beta_1 - 8 a_1 b_1 \gamma_1 - 
     8 \alpha_1 \beta_1 \gamma_1) y),
\end{aligned}
\end{equation}
with the first integral
\[
\begin{aligned}
H_4(x,y)
&=\frac{1}{(3 + 
   8 (x \alpha_1 + y \beta_1 + \gamma_1))^4}(24 (x \alpha_1 + y \beta_1 + \gamma_1)^3 + 
   16 (x \alpha_1 + y \beta_1 + \gamma_1)^4\\
   &\quad + 
   9 ((c_1 + a_1 x + b_1 y)^2 + (x \alpha_1 + 
        y \beta_1 + \gamma_1)^2)).
\end{aligned}
\]

\section{Proof of Theorem 1}
\label{sec:proof}

\begin{proof}
The proof is based on reducing the existence of crossing limit cycles to the solution of a polynomial system obtained from the matching conditions of the first integrals, and then estimating the number of solutions via algebraic arguments.

We consider a discontinuous piecewise differential system of type $(\mathrm{Ls})$–$(Q_i)$, with $i \in \{1,2,3,4\}$. Let $H_S(x,y)$ and $H_i(x,y)$ denote the corresponding first integrals in $\Sigma^+$ and $\Sigma^-$, respectively.

Assume that the system admits a crossing periodic orbit intersecting the discontinuity set $\Sigma$ at two points of the form $(x,0)$ and $(0,y)$, with $x>0$ and $y>0$. Then these points must satisfy the closing conditions
\begin{equation}\label{crossingEq}
H_S(x,0) = H_S(0,y), \qquad H_i(x,0) = H_i(0,y).
\end{equation}

The first equation can be written as a quadratic polynomial
\[
P_S(x,y) = -2Cx - 2By + \delta y^2 - \mu x^2 = 0.
\]

On the other hand, for each $i \in \{1,2,3,4\}$, the second equation can be expressed as
\[
H_i(x,0) - H_i(0,y) = \frac{P_i(x,y)}{D_i(x,y)} = 0,
\]
where $P_i(x,y)$ is a polynomial and $D_i(x,y) \neq 0$ in the region under consideration. Hence, the problem reduces to solving the polynomial system
\[
P_S(x,y) = 0, \qquad P_i(x,y) = 0.
\]
Let $d_i = \deg(P_i)$. By Bézout's theorem \cite{coolidge2004treatise}, the number of isolated solutions of this system in $\mathbb{C}^2$ is at most $2d_i$. Consequently, the number of real solutions is also bounded by $2d_i$.

Observe that $(0,0)$ is always a solution, but it does not correspond to a crossing limit cycle. Moreover, each crossing limit cycle is uniquely associated with a solution $(x,y)$ satisfying $x>0$ and $y>0$, since it determines the two intersection points of the orbit with $\Sigma$.

Therefore, the number of crossing limit cycles is bounded by the number of admissible solutions $(x,y)$ with $x>0$ and $y>0$, which is at most $2d_i - 1$.

A direct computation shows that $d_1 = 3$, $d_2 = 4$, $d_3 = 5$, and $d_4 = 6$. Hence, the corresponding upper bounds are
 $5$, $7$, $9$, and  $11$, respectively. The explicit expressions of the polynomials $P_i(x,y)$ are given in Appendix~\ref{appendixA}.

This establishes the upper bounds in statements (i)--(iv).

\medskip
It remains to provide explicit constructions showing that each class admits at least four crossing limit cycles.

\noindent\textbf{Case $(\mathrm{Ls})$–$(\mathtt{Q}_1)$.}
We exhibit a discontinuous piecewise differential system of  type $(\mathtt{L}_s)$-$(\mathtt{Q}_1)$
possessing four crossing limit cycles.

In $\Sigma^+$, we consider the linear Hamiltonian saddle

\begin{align}\label{sist1-Hs-H1}
\dot{x} =&  \frac{79}{100}x - \frac{1}{20}y-\frac{53}{100} ,\\
\dot{y} =&  - \frac{33}{100}x - \frac{79}{100}y + \frac{22}{25}\nonumber, 
 \end{align} with the first integral
\begin{equation*}
    H_s(x,y) =\frac{33}{200}x^2 + \frac{79}{100}\,x\,y - \frac{1}{40}y^2 - \frac{22}{25}x - \frac{53}{100}y
\end{equation*}
In $\Sigma^{-}$, we consider the quadratic isochronous differential center of type $(\mathtt{Q}_1)$
\begin{equation}\label{sist2-Hs-H1}
    \begin{array}{ll}
    \dot{x} =&0.386769 + 6.10786x - 2.58171x^2 - 4.01571y + 0.411844xy + 0.743863y^2,\\  [4pt]
    \dot{y} =& -0.637351 + 10.6644x - 0.546859x^2 - 5.71494y - 3.9509xy + 2.39596y^2, \\
 \end{array}
\end{equation}
with first integral    
\begin{equation*}
\begin{array}{ll}
H_1(x,y) =& \frac{(0.0982285 - 2.27858x + 1.30094y)^2
+ (0.903598 - 0.14395x - \frac{13}{25}y)^2  }
{1 + 2(0.903598 - 0.14395x - \frac{13}{25}y)}.
\end{array}
\end{equation*}
By solving system \eqref{crossingEq}, we obtain four pairs of real solutions $(p_i,q_i)$, with $p_i = (x_i,0)$ and $q_i = (0,y_i)$ for $i=1,\dots,4$, satisfying $x_i,y_i>0$:
\begin{align*}
p_1 \approx & (0.247078, 0), \quad q_1 \approx (0, 0.384272), \\
p_2 \approx & (0.415771, 0), \quad q_2 \approx (0, 0.618477), \\
p_3 \approx & (0.612878, 0), \quad q_3 \approx (0, 0.865349), \\
p_4 \approx & (0.688027, 0), \quad q_4 \approx (0, 0.952239).
\end{align*}
Then, the discontinuous piecewise
linear differential system \eqref{sist1-Hs-H1}–\eqref{sist2-Hs-H1} gives rise to the
four limit cycles showed in Figure \ref{fig-Hs-H1}. 
\begin{figure}[h] 
\begin{center}
\includegraphics[scale=0.3]{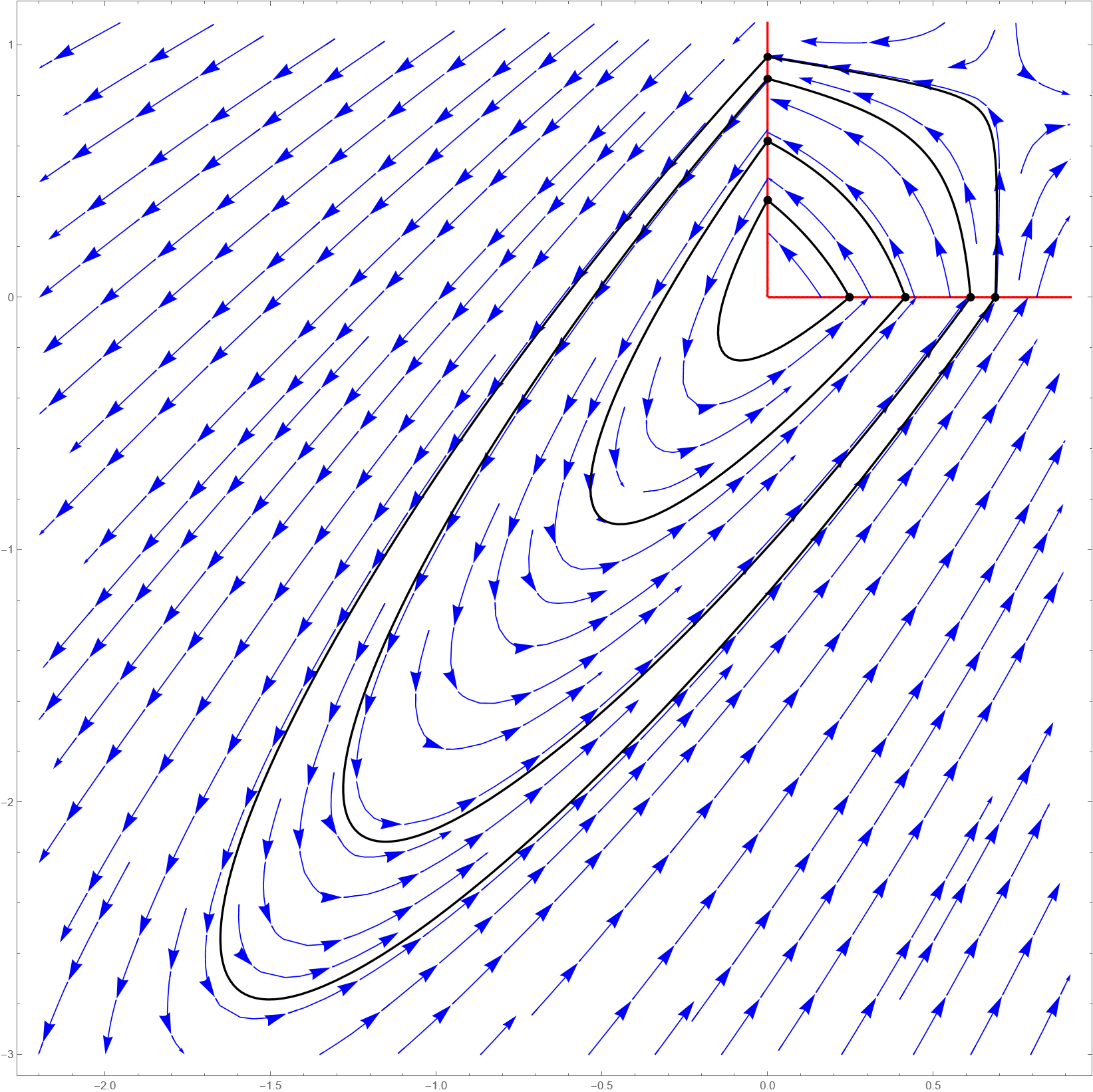}
\caption{The four limit cycle of the discontinuous piecewise differential system \eqref{sist1-Hs-H1}-\eqref{sist2-Hs-H1}  of Theorem \ref{Teor-Principal}.}
\label{fig-Hs-H1}
\end{center}
\end{figure}
\smallskip

\noindent\textbf{Case $(\mathrm{Ls})$–$(\mathtt{Q}_2)$.}
We exhibit a system of type $(\mathtt{L}_s)$-$(\mathtt{Q}_2)$ possessing four crossing limit cycles

In $\Sigma^+$, we consider the linear Hamiltonian saddle
\begin{align}\label{sist1-Hs-H2}
    \dot{x} &=- \frac{47}{100} x + \frac{19}{100}y+\frac{11}{100} , \\
    \dot{y} &=  - \frac{19}{25}x + \frac{47}{100}y-\frac{17}{20}\nonumber.
\end{align}
with the first integral
\begin{equation*}
    H_s(x,y) =   \frac{19}{50}x^2 - \frac{47}{100}\,x\,y + \frac{19}{200}y^2+\frac{17}{20}x+ \frac{11}{100}y .
\end{equation*}
In $\Sigma^{-}$, we consider the quadratic isochronous differential center of type $(\mathtt{Q}_2)$
\begin{equation}\label{sist2-Hs-H2}
    \begin{array}{ll}
\dot{x} &= 0.298027 - 0.325852 x + 0.064 x^2 + 0.485269 y - 0.0305115 x y - 8.47033 \cdot 10^{-19} y^2,\\
\dot{y} &= -2.27794 - 2.17886 x + 0.435481 y + 0.064 x y - 0.0305115 y^2,
 \end{array}
\end{equation}

with first integral    
\begin{equation*}
    \begin{array}{ll}
H_2(x,y) =&\frac{\left(0.0365432 + \frac{8}{125}x - 0.0305115\,y\right)^2 
+ \left(-0.0872418 - 0.0409482\,x - \frac{1}{50}y\right)^2}
{\left(0.912758 - 0.0409482\,x - \frac{1}{50}y\right)^2}.
    \end{array}
\end{equation*}

By solving system \eqref{crossingEq}, we obtain four pairs of real solutions $(p_i,q_i)$, with $p_i = (x_i,0)$ and $q_i = (0,y_i)$ for $i=1,\dots,4$, satisfying $x_i,y_i>0$:
\begin{align*}
p_1 \approx & (0.879937, 0), \quad q_1 \approx (0, 2.78341), \\
p_2 \approx & (0.963775, 0), \quad q_2 \approx (0, 2.98109), \\
p_3 \approx & (1.01989, 0), \quad q_3 \approx (0, 3.11175), \\
p_4 \approx & (1.06254, 0), \quad q_4 \approx (0, 3.21026).
\end{align*}

Then, the discontinuous piecewise linear differential system \eqref{sist1-Hs-H2}–\eqref{sist2-Hs-H2} gives rise to the
four limit cycles showed in Figure \ref{fig-Hs-H2}. 
\begin{figure}[h] 
\begin{center}
\includegraphics[scale=0.3]{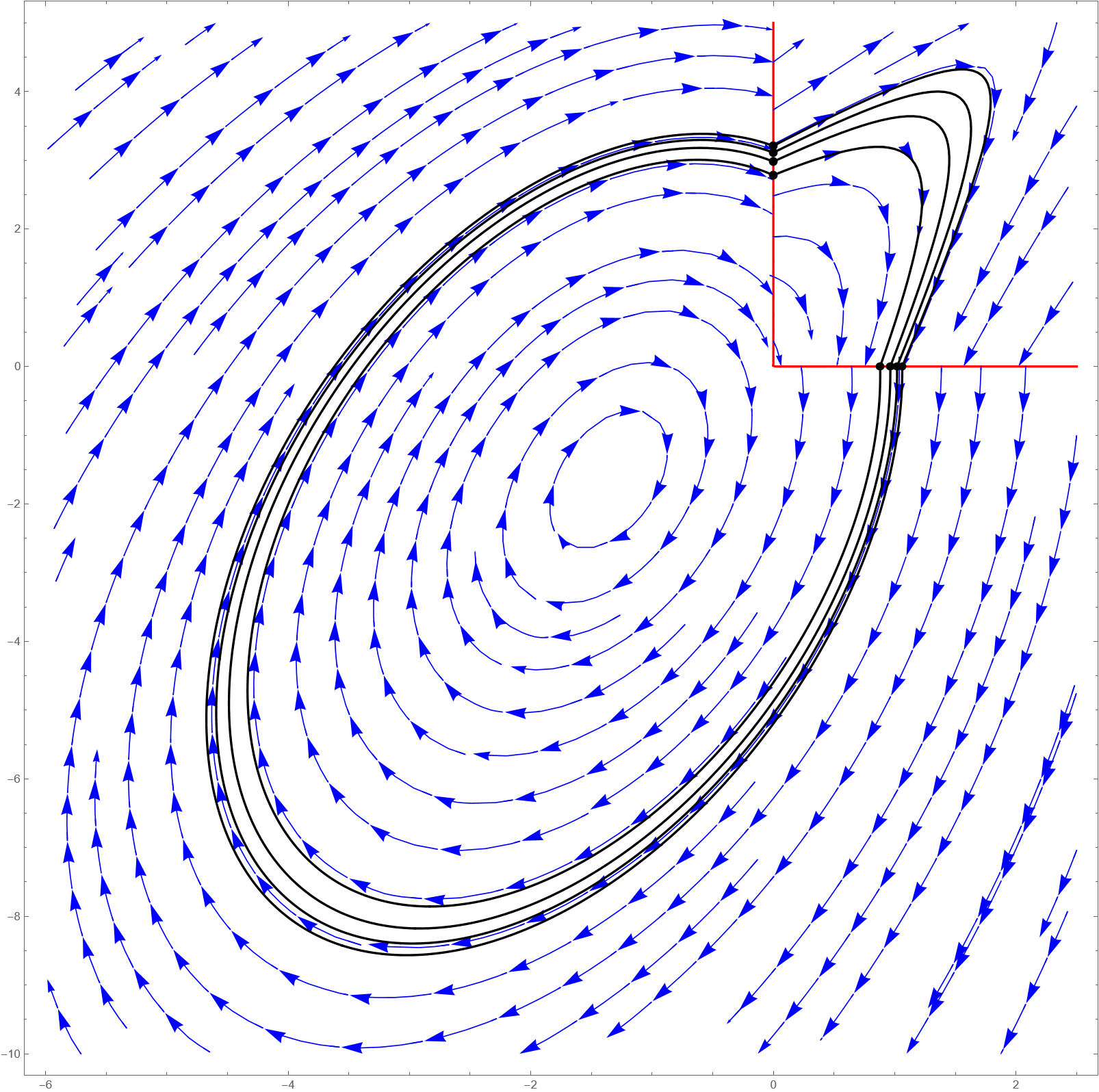}
\caption{The four limit cycle of the discontinuous piecewise differential system \eqref{sist1-Hs-H2}-\eqref{sist2-Hs-H2}  of Theorem \ref{Teor-Principal}.}
\label{fig-Hs-H2}
\end{center}
\end{figure}

\smallskip

\noindent\textbf{Case $(\mathrm{Ls})$–$(\mathtt{Q}_3)$.}
We exhibit a system of type $(\mathtt{L}_s)$-$(\mathtt{Q}_3)$ possessing four crossing limit cycles

In $\Sigma^+$, we consider the linear Hamiltonian saddle
 \begin{align}\label{sist1-Hs-H3}
    \dot{x} &= \frac{9}{10}\,x - \frac{4}{5}\,y - \frac{1}{10}, \\
    \dot{y} &= - \frac{1}{5}x + \frac{9}{10}y + \frac{3}{5}\nonumber.
\end{align}
 with the first integral
\begin{equation*}
    H_s(x,y) = \frac{1}{10}\,x^2 - \frac{9}{10}\,x\,y  - \frac{2}{5}\,y^2 - \frac{3}{5}\,x + \frac{1}{10}\,y  .
\end{equation*}
In $\Sigma^{-}$, we consider the quadratic isochronous differential center of type $(\mathtt{Q}_3)$
\begin{equation}\label{sist2-Hs-H3}
    \begin{array}{ll}
    \dot{x} =& \big( 1.46492 + 6.83181 x + 5.7871 x^2 - 4.13813 y - 6.56629 x y + 1.64208 y^2 \big), \\[10pt]
    \dot{y} =& \big( -0.0942582 + 6.10742 x + 6.61795 x^2 - 3.75871 y - 7.12975 x y + 1.58864 y^2 \big).
    \end{array}
\end{equation}
with first integral  
\begin{equation*}
\begin{aligned}
H_3(x,y) =\;& \frac{1}{-3 + 16\left(0.657025 + 0.639096 x - \frac{3}{11}y\right)} \Big(
9\Big(\left(0.657025 + 0.639096 x - \tfrac{3}{11}y\right)^2 \\
&\quad + \left(-0.384136 - \tfrac{5}{9}x + 0.423626 y\right)^2\Big) \\
&\quad - 24\left(0.657025 + 0.639096 x - \tfrac{3}{11}y\right)
\left(-0.384136 - \tfrac{5}{9}x + 0.423626 y\right)^2 \\
&\quad + 16\left(-0.384136 - \tfrac{5}{9}x + 0.423626 y\right)^4
\Big).
\end{aligned}
\end{equation*}

By solving system \eqref{crossingEq}, we obtain four pairs of real solutions $(p_i,q_i)$, with $p_i = (x_i,0)$ and $q_i = (0,y_i)$ for $i=1,\dots,4$, satisfying $x_i,y_i>0$:
\begin{align*}
p_1 \approx & (2.21003, 0), \quad q_1 \approx (0, 1.57745), \\
p_2 \approx & (1.64943, 0), \quad q_2 \approx (0, 1.47022), \\
p_3 \approx & (1.34953, 0), \quad q_3 \approx (0, 1.38381), \\
p_4 \approx & (1.07022, 0), \quad q_4 \approx (0, 1.28025).
\end{align*}

 Then, the discontinuous piecewise linear differential system \eqref{sist1-Hs-H3}–\eqref{sist2-Hs-H3} gives rise to the four limit cycles showed in Figure \ref{fig-Hs-H3}. 
\begin{figure}[h] 
\begin{center}
\includegraphics[scale=0.3]{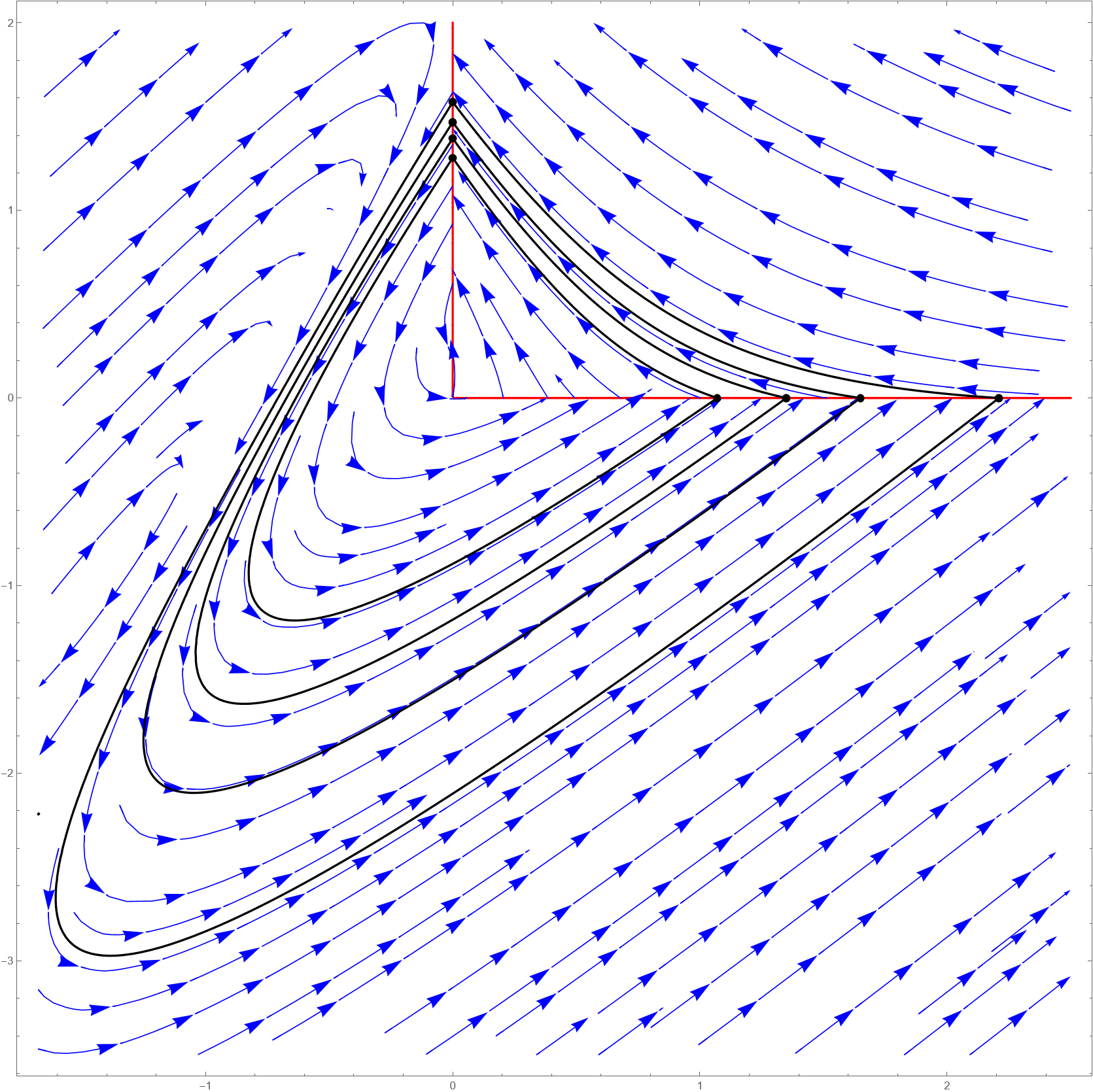}
\caption{The four limit cycle of the discontinuous piecewise differential system \eqref{sist1-Hs-H3}-\eqref{sist2-Hs-H3}  of Theorem \ref{Teor-Principal}.}
\label{fig-Hs-H3}
\end{center}
\end{figure}

\smallskip

\noindent\textbf{Case $(\mathrm{Ls})$–$(\mathtt{Q}_4)$.}
We exhibit a system of type $(\mathtt{L}_s)$-$(\mathtt{Q}_4)$ possessing four crossing limit cycles

In $\Sigma^+$, we consider the linear Hamiltonian saddle
\begin{align}\label{sist1-Hs-H4}
    \dot{x} = & - \frac{12}{25}\,x + 0.00914158\,y -\frac{3}{100} \\
    \dot{y} = & 0.0193132\,x + \frac{12}{25}\,y + 0.0309612, 
 \end{align} with the first integral
\begin{equation*}
    H_s(x,y) = - 0.00965661\,x^2 - \frac{12}{25}\,x\,y + 0.00457079\,y^2 - 0.0309612\,x  - \frac{3}{100}\,y  .
\end{equation*}
In $\Sigma^{-}$, we consider the quadratic isochronous differential center of type
$(\mathtt{Q}_4)$
\begin{equation}\label{sist2-Hs-H4}
    \begin{array}{ll}
    \dot{x} =& \frac{8447}{337500} - \frac{6899}{37500}x + \frac{6329}{3375}x^2 - \frac{28429}{37500}y - \frac{9752}{3375}xy + \frac{119}{3375}y^2, \\[8pt]
    \dot{y} =& -\frac{14657}{337500} + \frac{42469}{37500}x - \frac{1559}{3375}x^2 + \frac{633}{12500}y + \frac{11192}{3375}xy - \frac{3449}{3375}y^2,
 \end{array}
\end{equation}
with first integral    
\begin{equation*}
    \begin{array}{ll}
H_4(x,y) =& \frac{
9\Big( \left(-\frac{1}{100} + \frac{53}{100}x - \frac{37}{100}y\right)^2
+ \left(-\frac{3}{100} + \frac{x}{2} + \frac{y}{2}\right)^2 \Big)
+ 24\left(-\frac{3}{100} + \frac{x}{2} + \frac{y}{2}\right)^3
+ 16\left(-\frac{3}{100} + \frac{x}{2} + \frac{y}{2}\right)^4
}{
\left(3 + 8\left(-\frac{3}{100} + \frac{x}{2} + \frac{y}{2}\right)\right)^4
}.
\end{array}
\end{equation*}
By solving system \eqref{crossingEq}, we obtain four pairs of real solutions $(p_i,q_i)$, with $p_i = (x_i,0)$ and $q_i = (0,y_i)$ for $i=1,\dots,4$, satisfying $x_i,y_i>0$:
\begin{align*}
p_1 \approx & (1.15329, 0), \quad q_1 \approx (0, 2.89772), \\
p_2 \approx & (0.874219, 0), \quad q_2 \approx (0, 1.48358), \\
p_3 \approx & (0.491262, 0), \quad q_3 \approx (0, 0.648826), \\
p_4 \approx & (0.300288, 0), \quad q_4 \approx (0, 0.358519).
\end{align*}

Then, the discontinuous piecewise linear differential system
\eqref{sist1-Hs-H4}–\eqref{sist2-Hs-H4} gives rise to the
four limit cycles showed in Figure  \ref{fig-Hs-H4}. 
\begin{figure}[h] 
\begin{center}
\includegraphics[scale=0.3]{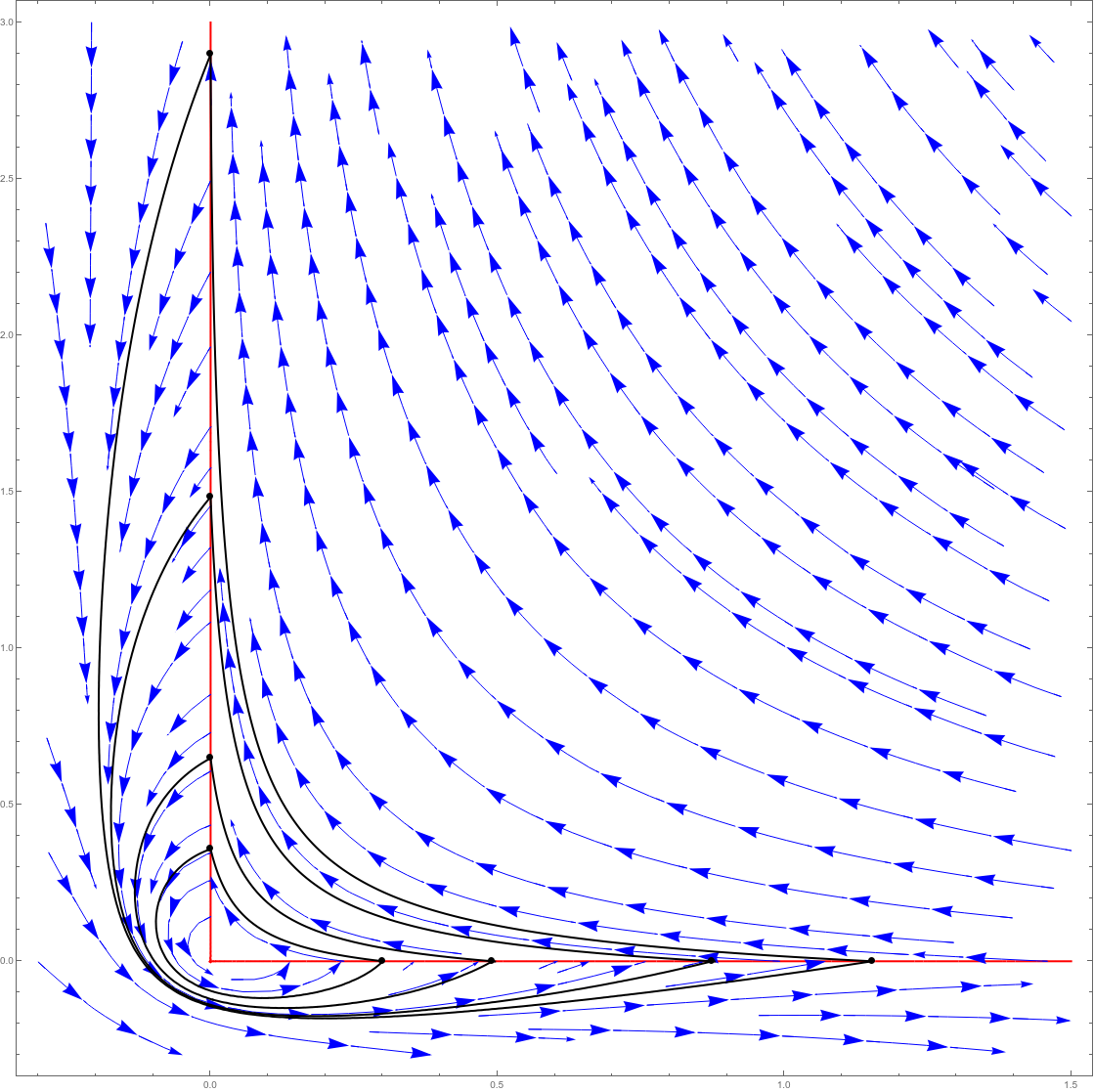}
\caption{The four limit cycle of the discontinuous piecewise differential system \eqref{sist1-Hs-H4}-\eqref{sist2-Hs-H4}  of Theorem \ref{Teor-Principal}.}
\label{fig-Hs-H4}
\end{center}
\end{figure}
\end{proof}
\smallskip

The attainability of the upper bound established in Theorem~\ref{Teor-Principal} for the maximum number of limit cycles in this class of discontinuous piecewise differential systems is still unknown.

 \section*{Acknowledgments}
 The first author was partially supported by the 2026 Summer Postdoctoral Program of the Instituto de Matemática Pura e Aplicada (IMPA).
 The second author was partially supported by  CNPq 169201/2023-6  grant.

\appendix
\label{appendixA}
\section{ }
For completeness, we provide the explicit expressions of the polynomials $P_i(x,y)$, $i=1,2,3,4$, appearing in the proof of Theorem~\ref{Teor-Principal}. These polynomials are obtained from the equations
\[
H_i(x,0)-H_i(0,y)=0,
\]
after clearing denominators.

\begin{equation*}
 \begin{aligned}
P_1(x,y)=\;&
2\bigl(a_1^2+\alpha_1^2\bigr)\beta_1\,x^2y
-2\alpha_1\bigl(b_1^2+\beta_1^2\bigr)\,xy^2
-4c_1\bigl(b_1\alpha_1-a_1\beta_1\bigr)\,xy+\bigl(a_1^2+\alpha_1^2\bigr)\\
&
\bigl(1+2\gamma_1\bigr)x^2-\bigl(b_1^2+\beta_1^2\bigr)\bigl(1+2\gamma_1\bigr)y^2+2\Bigl(a_1c_1-c_1^2\alpha_1
+2a_1c_1\gamma_1+\alpha_1\gamma_1+\alpha_1\gamma_1^2\Bigr)x\\
&
-2\Bigl(b_1c_1-c_1^2\beta_1 + 2b_1c_1\gamma_1+\beta_1\gamma_1+\beta_1\gamma_1^2\Bigr)y,
\end{aligned}
\end{equation*}

\begin{equation*}
 \begin{aligned}
P_2(x,y)=\;&
\bigl(a_{1}^{2}\beta_{1}^{2}-b_{1}^{2}\alpha_{1}^{2}\bigr)x^{2}y^{2}
-4c_{1}(1+\gamma_{1})\bigl(b_{1}\alpha_{1}-a_{1}\beta_{1}\bigr)xy-2\Bigl(
b_{1}c_{1}\alpha_{1}^{2}
-a_{1}^{2}\beta_{1}
-\alpha_{1}^{2}\beta_{1}
\\
&-a_{1}^{2}\beta_{1}\gamma_{1}
\Bigr)x^{2}y -\Bigl(
b_{1}^{2}\alpha_{1}
-a_{1}c_{1}\beta_{1}^{2}
+\alpha_{1}\beta_{1}^{2}
+b_{1}^{2}\alpha_{1}\gamma_{1}
\Bigr)2xy^{2}+\Bigl(
a_{1}^{2}+\alpha_{1}^{2}
-c_{1}^{2}\alpha_{1}^{2}
\\
& +2(a_{1}^{2}+\alpha_{1}^{2})\gamma_{1}
+a_{1}^{2}\gamma_{1}^{2}
\Bigr)x^{2}+\Bigl(
-b_{1}^{2}-\beta_{1}^{2}
+c_{1}^{2}\beta_{1}^{2}
-2(b_{1}^{2}+\beta_{1}^{2})\gamma_{1}
-b_{1}^{2}\gamma_{1}^{2}
\Bigr)y^{2}\\
&+2(1+\gamma_{1})
\bigl(a_{1}c_{1}-c_{1}^{2}\alpha_{1}
+a_{1}c_{1}\gamma_{1}
+\alpha_{1}\gamma_{1}\bigr)x-2(1+\gamma_{1})
\bigl(b_{1}c_{1}-c_{1}^{2}\beta_{1}
+b_{1}c_{1}\gamma_{1}
+\beta_{1}\gamma_{1}\bigr)y,
\end{aligned}
\end{equation*}

\begin{equation*}
 \begin{aligned}
P_3(x,y)=\;&
-256\,b_1^{4}\alpha_1\,x y^{4}
+256\,a_1^{4}\beta_1\,x^{4}y
+128\,\bigl(8a_1^{3}c_1\beta_1-3a_1^{2}\alpha_1\beta_1\bigr)x^{3}y-128\,\bigl(8b_1^{3}c_1\alpha_1\\
&
-3b_1^{2}\alpha_1\beta_1\bigr)x y^{3}-32c_1\bigl(b_1\alpha_1-a_1\beta_1\bigr)
\bigl(9+32c_1^{2}-24\gamma_1\bigr)xy + 8a_1^{2}\bigl(8a_1c_1-3\alpha_1\bigr)
\\
&
\bigl(-3+16\gamma_1\bigr)x^{3}-8b_1^{2}\bigl(8b_1c_1-3\beta_1\bigr)
\bigl(-3+16\gamma_1\bigr)y^{3}-
3\bigl(-3+16\gamma_1\bigr)
\bigl(-3a_1^{2}-32a_1^{2}c_1^{2}
\\
& +16a_1c_1\alpha_1-3\alpha_1^{2}
+8a_1^{2}\gamma_1\bigr)x^{2}+16\bigl(-3a_1^{4}+16a_1^{4}\gamma_1\bigr)x^{4}
+3\bigl(-3+16\gamma_1\bigr)
\bigl(-3b_1^{2}\\
&-32b_1^{2}c_1^{2}
+16b_1c_1\beta_1-3\beta_1^{2}
+8b_1^{2}\gamma_1\bigr)y^{2}-16\bigl(-3b_1^{4}+16b_1^{4}\gamma_1\bigr)y^{4}
-2\bigl(9+32c_1^{2}\\
&-24\gamma_1\bigr)
\bigl(3a_1c_1+4c_1^{2}\alpha_1
-16a_1c_1\gamma_1
+3\alpha_1\gamma_1\bigr)x-48\bigl(3b_1^{2}\alpha_1
+32b_1^{2}c_1^{2}\alpha_1
-16b_1c_1\alpha_1\beta_1\\
&+3\alpha_1\beta_1^{2}
-8b_1^{2}\alpha_1\gamma_1\bigr)xy^{2}
+2\bigl(9+32c_1^{2}-24\gamma_1\bigr)
\bigl(3b_1c_1+4c_1^{2}\beta_1
-16b_1c_1\gamma_1
+3\beta_1\gamma_1\bigr)y\\ &+48\bigl(3a_1^{2}\beta_1
+32a_1^{2}c_1^{2}\beta_1
-16a_1c_1\alpha_1\beta_1
+3\alpha_1^{2}\beta_1
-8a_1^{2}\beta_1\gamma_1\bigr)x^{2}y,
\end{aligned}
\end{equation*}

\begin{equation*}
 \begin{aligned}
P_4(x,y)=\;&
18432 (2 a_{1}^{2}-\alpha_{1}^{2}) \beta_{1}^{4}  x^{2} y^{4} 
-18432 \alpha_{1}^{4} (2 b_{1}^{2}-\beta_{1}^{2})x^{4} y^{2} +
9216 (2 a_{1}^{2}-\alpha_{1}^{2}) \beta_{1}^{3} (3+8\gamma_{1}) x^{2} y^{3}
\\
&-9216 \alpha_{1}^{3} (2 b_{1}^{2}-\beta_{1}^{2}) (3+8\gamma_{1})x^{3} y^{2} -3456  (b_{1}\alpha_{1}-a_{1}\beta_{1})(b_{1}\alpha_{1}+a_{1}\beta_{1})(3+8\gamma_{1})^{2}x^{2} y^{2}
 \\
&-576 c_{1} (b_{1}\alpha_{1}-a_{1}\beta_{1})(3+8\gamma_{1})^{3} x y +4608  \beta_{1}^{4} (16 a_{1} c_{1}-3\alpha_{1}-8\alpha_{1}\gamma_{1})x y^{4}
\\
&-2304 \beta_{1}^{3} (3+8\gamma_{1})(-16 a_{1} c_{1}+3\alpha_{1}+8\alpha_{1}\gamma_{1})x y^{3}  -4608 \alpha_{1}^{4} (16 b_{1} c_{1}-3\beta_{1}-8\beta_{1}\gamma_{1})x^{4} y 
\\
&+2304  \alpha_{1}^{3} (3+8\gamma_{1})(-16 b_{1} c_{1}+3\beta_{1}+8\beta_{1}\gamma_{1})x^{3} y +288  (3+8\gamma_{1})^{2}
(-24 b_{1} c_{1}\alpha_{1}^{2}+3 a_{1}^{2}\beta_{1}\\
&
+3\alpha_{1}^{2}\beta_{1}+8 a_{1}^{2}\beta_{1}\gamma_{1}
+8\alpha_{1}^{2}\beta_{1}\gamma_{1})x^{2} y -288 (3+8\gamma_{1})^{2}
(3 b_{1}^{2}\alpha_{1}-24 a_{1} c_{1}\beta_{1}^{2}
+3\alpha_{1}\beta_{1}^{2} \\
&+8 b_{1}^{2}\alpha_{1}\gamma_{1}
+8\alpha_{1}\beta_{1}^{2}\gamma_{1})x y^{2} -144 \alpha_{1}^{4}(-9+256 c_{1}^{2}-96\gamma_{1}-128\gamma_{1}^{2})x^{4}
+144 \beta_{1}^{4}(-9+256 c_{1}^{2} \\
&-96\gamma_{1}-128\gamma_{1}^{2})y^{4}+72 \alpha_{1}^{3}(3+8\gamma_{1})(9-256 c_{1}^{2}+96\gamma_{1}  +128\gamma_{1}^{2})x^{3}
-72 \beta_{1}^{3}(3+8\gamma_{1})\\
&(9-256 c_{1}^{2}+96\gamma_{1}+128\gamma_{1}^{2})y^{3}+18 (3+8\gamma_{1})^{3}
(3 a_{1} c_{1}-16 c_{1}^{2}\alpha_{1}
+8 a_{1} c_{1}\gamma_{1}+3\alpha_{1}\gamma_{1}
  \\
&+4\alpha_{1}\gamma_{1}^{2})x+9(3+8\gamma_{1})^{2}
(9 a_{1}^{2}+9\alpha_{1}^{2}-384 c_{1}^{2}\alpha_{1}^{2}
+48 a_{1}^{2}\gamma_{1}+120\alpha_{1}^{2}\gamma_{1}
+64 a_{1}^{2}\gamma_{1}^{2}  \\
&+160\alpha_{1}^{2}\gamma_{1}^{2}) x^{2}-18  (3+8\gamma_{1})^{3}
\left(3 b_{1} c_{1}-16 c_{1}^{2}\beta_{1}
+8 b_{1} c_{1}\gamma_{1}+3\beta_{1}\gamma_{1}
+4\beta_{1}\gamma_{1}^{2}\right)y \\
&-9 (3+8\gamma_{1})^{2}
\left(9 b_{1}^{2}+9\beta_{1}^{2}-384 c_{1}^{2}\beta_{1}^{2}
+48 b_{1}^{2}\gamma_{1}+120\beta_{1}^{2}\gamma_{1}
+64 b_{1}^{2}\gamma_{1}^{2}+160\beta_{1}^{2}\gamma_{1}^{2}\right)y^{2}.
\end{aligned}
\end{equation*}

\bibliographystyle{acm}
\bibliography{sample.bib}



\end{document}